\newtheorem{theorem}{Theorem}
\newtheorem{prop}[theorem]{Proposition}
\newtheorem{lemma}[theorem]{Lemma}
\newtheorem{cor}[theorem]{Corollary}
\theoremstyle{definition}
\newtheorem{definition}[theorem]{Definition}
\newtheorem{example}[theorem]{Example}
\theoremstyle{remark}
\numberwithin{theorem}{section}
\numberwithin{equation}{section}
\newcommand{\R}{\mathbb{R}}
\renewcommand{\phi}{\varphi}
\newcommand{\paren}[1]{\left(#1\right)}
\newcommand{\set}[1]{\left\{#1\right\}}
\newcommand{\bracket}[1]{\left[#1\right]}
\newcommand{\abs}[1]{\left\lvert#1\right\rvert}
\newcommand{\Vtil}{\widetilde{V}}
\begin{document}

\title{Double Bubbles on the Line with Log-convex Density $f$ with $(\log f)'$ Bounded}

\author{Nat Sothanaphan}
\address{Courant Institute of Mathematical Sciences\\ New York University\\
New York, NY \\ 10003, USA}
\email{natsothanaphan@gmail.com}

\begin{abstract}
We extend results of Bongiovanni et al. \cite{db} on double bubbles on the line with log-convex density
to the case where the derivative of the log of the density is bounded.
We show that the tie function between the double interval and the triple interval still exists but may blow up to infinity in finite time.
For the first time, a density is presented for which the blowup time is positive and finite.
\end{abstract}

\maketitle

\section{Introduction}
Consider $\R$ with a symmetric, strictly log-convex, $C^1$ density $f$.
Bongiovanni et al. \cite{db} show that a perimeter-minimizing double bubble enclosing volumes $V_1 \leq V_2$ is one of the following two configurations of Figure \ref{fig:doubletriple}:
\begin{itemize}
\item A \emph{double interval}: two contiguous intervals in equilibrium enclosing volumes $V_1$ and $V_2$;
\item A \emph{triple interval}: an interval symmetric about the origin enclosing volume $V_1$ flanked by two intervals on each side, each enclosing volume $V_2/2$.
\end{itemize}
They also show that, if $(\log f)'$ is unbounded, there is a tie function $\lambda(V_1)$ such that
for $V_2=\lambda(V_1)$, the double interval and the triple interval tie (have equal perimeter);
for $V_2>\lambda(V_1)$,  the triple interval is uniquely perimeter minimizing; and
for $V_2<\lambda(V_1)$, the double interval is uniquely perimeter minimizing up to reflection \cite[Thm. 4.15]{db}.

\begin{figure}[ht]
\vspace{-20pt}
\includegraphics[width=200pt]{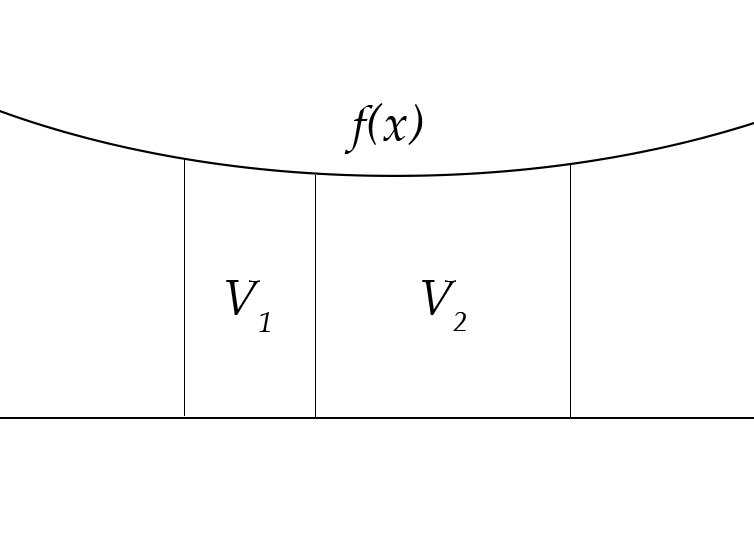}
\hspace{30pt}
\includegraphics[width=200pt]{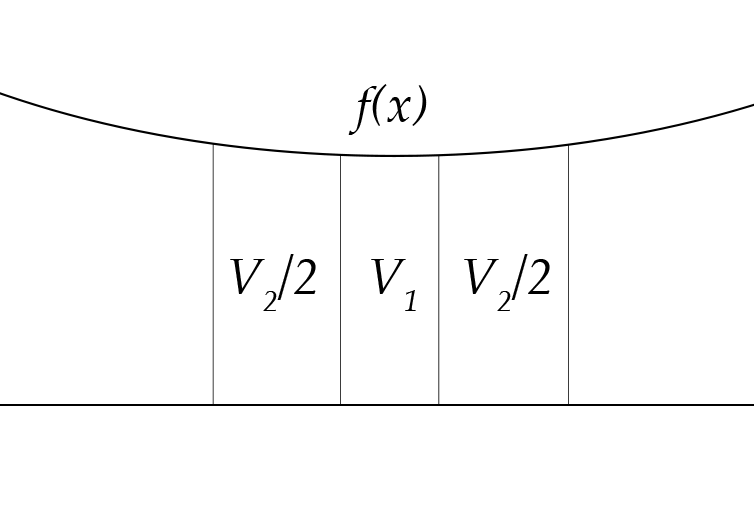}
\vspace{-25pt}
\caption{A double interval and a triple interval on the real line.}
\label{fig:doubletriple}
\end{figure}

The goal of this note is to extend this result to the case where $(\log f)'$ is bounded.
We show that the tie function $\lambda$ still exists, but it may ``blow up in finite time'': it is defined only for
$V_1 <V_0$ for some $V_0$ and approaches infinity as $V_1 \to V_0$. See Figure \ref{fig:tiefunc}.
This proves the conjecture stated at the end of \cite[Section 4]{db}.

\begin{figure}[ht]
\begin{tikzpicture}
\draw [<->, thick] (0,4) node [above left] {$V_2$} -- (0,0) -- (4,0) node [below right] {$V_1$};
\draw (0,0) -- (4,4);
\draw node at (1.4,2.8) {Double};
\node [below] at (2,-0.7) {$V_0=0$};

\begin{scope}[shift={(0.5,0)}]
\draw [<->, thick] (5,4) node [above left] {$V_2$} -- (5,0) -- (9,0) node [below right] {$V_1$};
\draw (5,0) -- (9,4);
\draw [help lines, dashed] (6.7,0) node [below] {$V_0$} -- (6.7,4);
\node [below] at (6.7,0) {$V_0$};
\draw [thick, blue] (5,1) to [out=30,in=265] (6.55,4) node [above] {$\lambda$};
\draw node at (5.7,3) {Triple};
\draw node [rotate=45] at (6.7,2.2) {Double};
\node [below] at (7,-0.7) {$0<V_0<\infty$};
\end{scope}

\begin{scope}[shift={(1,0)}]
\draw [<->, thick] (10,4) node [above left] {$V_2$} -- (10,0) -- (14,0) node [below right] {$V_1$};
\draw (10,0) -- (14,4);
\draw [thick, blue] (10,1.4) to [out=30,in=235] (12.8,4) node [above] {$\lambda$};
\draw node at (11,3) {Triple};
\draw node [rotate=45] at (11.8,2.3) {Double};
\node [below] at (12,-0.7) {$V_0=\infty$};
\end{scope}
\end{tikzpicture}
\caption{Three possibilities for the tie function $\lambda$ between the double interval and the triple interval:
$\lambda$ not existing, $\lambda$ blowing up to infinity in finite time, and $\lambda$ existing for all time.
Each case occurs for some symmetric, strictly log-convex, $C^1$ density (Ex. \ref{ex:threetypes}).}
\label{fig:tiefunc}
\end{figure}
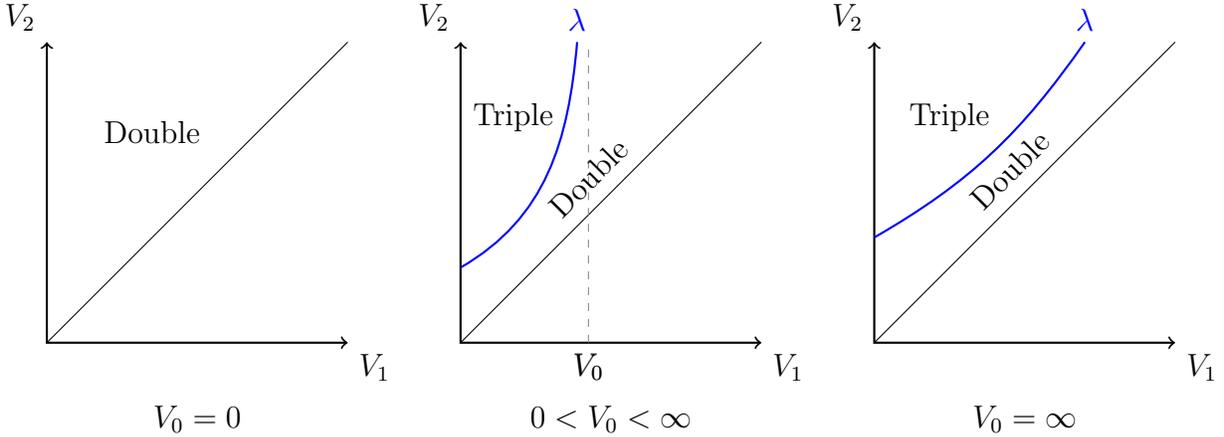

Our main result is as follows.

\begin{theorem}
\label{thm:main}
Consider $\R$ with a symmetric, strictly log-convex, $C^1$ density.
There exists a ``blowup time'' $0\leq V_0 \leq \infty$ such that, for each $V_1<V_0$,
there is a unique $\lambda(V_1)>V_1$ with the following properties.
\begin{itemize}
\item For $V_1<V_0$ and $V_2=\lambda(V_1)$, the double interval and the triple interval tie. \\
\item For $V_1 <V_0$ and $V_2>\lambda(V_1)$, the perimeter-minimizing double bubble is uniquely the triple interval. \\
\item For either $V_1 \geq V_0$ or $V_1<V_0$ and $V_1 \leq V_2<\lambda(V_1)$, the perimeter-minimizing double bubble is uniquely the double interval up to reflection.
\end{itemize}
See Figure \ref{fig:tiefunc}.
Moreover, each of the three types of blowup: $V_0=0$, $0<V_0<\infty$, and $V_0=\infty$, occurs for some symmetric, strictly log-convex, $C^1$ density.
\end{theorem}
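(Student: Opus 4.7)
For each fixed $V_1 \geq 0$, the plan is to study the perimeter difference $D(V_1, V_2) := P_D(V_1, V_2) - P_T(V_1, V_2)$ as a function of $V_2 \in [V_1, \infty)$, where $P_D, P_T$ are the perimeters of the unique equilibrium double and triple intervals; existence and smooth dependence of these equilibria follow from strict log-convexity and should be quotable directly from \cite{db}. First I would revisit the arguments of \cite[Sec.~4]{db} and verify that (a) $D$ is smooth in $V_2$ with derivatives computable via the first variation, (b) $D$ is strictly increasing in $V_2$ on $[V_1, \infty)$, and (c) $D(V_1, V_1) < 0$. Each of (a)--(c) should rest on strict log-convexity alone and never invoke unboundedness of $(\log f)'$, so they carry over verbatim to our setting. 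Together these yield a clean dichotomy for each $V_1$: either $D(V_1, \cdot)$ has a unique zero $\lambda(V_1) > V_1$, or $D(V_1, V_2) < 0$ for all $V_2 \geq V_1$.

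\textbf{Monotonicity in $V_1$ and the blowup time.} Let $D_\infty(V_1) := \lim_{V_2 \to \infty} D(V_1, V_2) \in (-\infty, \infty]$. By the dichotomy above, $\lambda$ is defined precisely on $E := \{V_1 \geq 0 : D_\infty(V_1) > 0\}$, and the main structural claim is that $E = [0, V_0)$ for some $V_0 \in [0, \infty]$. I would prove this by showing $D_\infty$ is nonincreasing (and, ideally, strictly decreasing where it is finite) in $V_1$: enlarging $V_1$ pushes the central chamber of the triple interval outward into regions of higher density, making the triple relatively more expensive. The cleanest implementation I see is a volume-exchange argument, redistributing volume between the central and outer chambers of a winning triple interval and applying log-convexity, combined with the first monotonicity to close the comparison. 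Setting $V_0 := \sup E$, the three itemized conclusions then follow by combining the two monotonicities with the reduction of any perimeter-minimizing double bubble to a double or triple interval from \cite[Thm.~4.15]{db}.

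\textbf{The three examples and the main obstacle.} Among the three blowup types, $V_0 = \infty$ is realized by any density already covered by \cite[Thm.~4.15]{db}, e.g.~$f(x) = e^{x^2}$, where $(\log f)'$ is unbounded. For $V_0 = 0$, I expect a density with $(\log f)'$ approaching $\pm M$ at $\pm\infty$ for some $M > 0$, such as $f(x) = e^{\sqrt{1+x^2}}$, to work: its log-derivative is bounded by $1$, the density is strictly log-convex, and an explicit asymptotic expansion of $D$ as $V_2 \to \infty$ should show double beats triple for every $V_1 > 0$. The main obstacle, and the key new content of the paper, is producing a density with $0 < V_0 < \infty$. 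My strategy is to introduce a one-parameter family $\{f_t\}_{t \in [0,1]}$ of admissible densities interpolating between a $V_0 = 0$ density and a $V_0 = \infty$ density, show that $V_0(f_t)$ depends continuously on $t$ (which will need uniform bounds on the equilibria), and apply the intermediate value theorem. Continuity of $V_0$ under such a perturbation is the delicate step; once established, a concrete density realizing any prescribed $V_0 \in (0, \infty)$ can be read off by tracking the transition in $t$.
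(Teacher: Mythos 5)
The structural half of your argument — monotonicity of $D$ in $V_2$, the limit $D_\infty$, monotonicity in $V_1$, and setting $V_0:=\sup E$ — is essentially the paper's proof with the sign of $\mu=P_3-P_2$ reversed. Your facts (a)--(c) are indeed quotable: Lemmas 4.8, 4.9 and Proposition 4.10 of \cite{db} never use unboundedness of $(\log f)'$, so no new volume-exchange argument is needed for monotonicity in $V_1$. One small point you should not skip: when $0<V_0<\infty$ you must also show $V_0\notin E$ (equivalently $D_\infty(V_0)\leq 0$), since monotonicity of $D_\infty$ alone does not rule out $D_\infty(V_0)>0$ with the sup still equal to $V_0$; the paper handles this with a short continuity argument for $\mu$.

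The genuine gaps are in the examples. First, your candidate for $V_0=0$, namely $f(x)=e^{\sqrt{1+x^2}}$, almost certainly gives $V_0=\infty$: in volume coordinates $f'(V)=x/\sqrt{1+x^2}=1-\tfrac{1}{2x^2}+\cdots$ with $x\sim\log V$, so $f(2V)-2f(V)$ grows like $V/(\log V)^3$ and the tail quantity $M=\lim_{V\to\infty}\bigl(f(2V)-2f(V)\bigr)$ is $+\infty$; by Corollary \ref{cor:criteriainf} this forces $V_0=\infty$ even though $(\log f)'$ is bounded. Boundedness of the log-derivative is not sufficient for the double interval to win — this is exactly the subtlety the paper isolates. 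Second, and more seriously, your intermediate-value strategy for producing $0<V_0<\infty$ rests on continuity of $V_0(f_t)$ along the family, which you flag as delicate but do not supply, and which is false in natural topologies: $V_0$ is governed by tail limits such as $M$, which jump from finite to infinite under arbitrarily small perturbations of the density, so $V_0$ can jump discontinuously from a finite value to $\infty$. Salvaging the family would require precisely the asymptotic analysis the paper performs — the explicit formula \eqref{eq:muell} for $\mu_\ell$ in terms of $L$, $M$, and $V^*$ — at which point no interpolation is needed: one simply exhibits a concrete density ($f(V)=\sqrt{V^2+1}-1/2$ in volume coordinates) and checks $0<V_0<\infty$ directly from Corollaries \ref{cor:criteriainf} and \ref{cor:criteria0}.
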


Proposition \ref{prop:existblowup} gives more properties of the tie function $\lambda$.
We provide a way to compute the blowup time $V_0$ in Proposition \ref{prop:computeblowup},
and criteria for when the blowup time is infinity or zero in Corollaries \ref{cor:criteriainf} and \ref{cor:criteria0}.
Finally, Example \ref{ex:threetypes} presents densities exhibiting the three types of blowup.
In particular, we show that it is possible for the blowup time to be positive and finite.

This note is organized as follows.
Section \ref{sec:existence} shows the existence of the blowup time $V_0$ and tie function $\lambda$ in Theorem \ref{thm:main} by modifying the proof of \cite[Thm. 4.15]{db}.

In Section \ref{sec:compute}, we derive a formula for the blowup time $V_0$ in Proposition \ref{prop:computeblowup}. This result is then used to develop criteria for when $V_0=\infty$ (Cor. \ref{cor:criteriainf}) and $V_0=0$ (Cor. \ref{cor:criteria0}).
Finally, we present examples of densities for which $V_0=0$, $0<V_0<\infty$, and $V_0=\infty$ in Example \ref{ex:threetypes}.

\section{Existence of Blowup Time}
\label{sec:existence}
We establish the existence of the blowup time $V_0$ and tie function $\lambda$ in Theorem \ref{thm:main}.
Our notation follows Bongiovanni et al. \cite{db}.
For prescribed volumes $V_1 \leq V_2$, let $\mu(V_1,V_2)=P_3-P_2$
be the difference of perimeters of the triple interval and the double interval.
In Bongiovanni et al. \cite{db}, Proposition 4.11 requires the extra hypothesis that $(\log f)'$ is unbounded,
but Lemmas 4.8, 4.9, and 4.14 and Proposition 4.10 do not
and hold for any symmetric, strictly log-convex, $C^1$ density.

The following quantity is useful in characterizing the blowup time.

\begin{definition}
\label{def:muell}
Consider $\R$ with a symmetric, strictly log-convex, $C^1$ density.
For each $V_1$, define $\mu_\ell(V_1)$ to be
$$\mu_\ell(V_1) = \lim_{V_2 \to \infty} \mu(V_1,V_2),$$
the limit of the difference of perimeters of the triple interval and the double interval as $V_2 \to \infty$.
\end{definition}

Notice that $\mu_\ell$ is well defined because $\mu$ is strictly decreasing in $V_2$ \cite[Lemma 4.9]{db},
with $-\infty \leq \mu_\ell < \infty$. Moreover, since $\mu$ is strictly increasing in $V_1$ \cite[Lemma 4.9]{db},
$\mu_\ell$ is nondecreasing.

We can now state the following proposition, which proves the existence of the blowup time $V_0$ and tie function $\lambda$.

\begin{prop}
\label{prop:existblowup}
Consider $\R$ with a symmetric, strictly log-convex, $C^1$ density.
There exists a ``blowup time'' $0\leq V_0 \leq \infty$ such that, for each $V_1<V_0$,
there is a unique $\lambda(V_1)>V_1$ with the following properties.
\begin{itemize}
\item For $V_1<V_0$ and $V_2=\lambda(V_1)$, the double interval and the triple interval tie,
and they are the only perimeter-minimizing double bubbles up to reflection. \\
\item For $V_1 <V_0$ and $V_2>\lambda(V_1)$, the perimeter-minimizing double bubble is uniquely the triple interval. \\
\item For either $V_1 \geq V_0$ or $V_1<V_0$ and $V_1 \leq V_2<\lambda(V_1)$, the perimeter-minimizing double bubble is uniquely the double interval up to reflection. \\
\item $\lambda$ is strictly increasing, $C^1$, tends to infinity as $V_1 \to V_0$, and tends to a positive limit as $V_1 \to 0$.
\end{itemize}
\end{prop}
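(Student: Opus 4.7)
The plan is to reduce everything to the sign of the perimeter difference $\mu(V_1, V_2) = P_3 - P_2$ and its large-$V_2$ limit $\mu_\ell$. First, define
\[
V_0 = \sup\{V_1 \geq 0 : \mu_\ell(V_1) < 0\},
\]
with $V_0 = 0$ when this set is empty. Because $\mu_\ell$ is nondecreasing, this gives $\mu_\ell(V_1) < 0$ for all $V_1 < V_0$ and $\mu_\ell(V_1) \geq 0$ for all $V_1 > V_0$ (a short continuity argument for $\mu_\ell$ handles the boundary case $V_1 = V_0$). For any $V_1 < V_0$, the map $V_2 \mapsto \mu(V_1, V_2)$ is continuous and strictly decreasing by \cite[Lemma 4.9]{db}, positive for $V_2$ just above $V_1$ (since for small volumes the log-convex problem is nearly Euclidean and the double interval strictly wins), and converges to $\mu_\ell(V_1) < 0$ as $V_2 \to \infty$. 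The intermediate value theorem then produces a unique $\lambda(V_1) \in (V_1, \infty)$ with $\mu(V_1, \lambda(V_1)) = 0$.

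Once $\lambda$ is defined, the three bulleted minimization statements all follow from the sign of $\mu$ combined with the classification already proved in \cite{db} (Proposition 4.10 and surrounding lemmas) that any perimeter-minimizing double bubble must be a double or a triple interval. Concretely, $\mu > 0$ means the double interval strictly beats the triple and is thus the unique minimizer up to reflection; $\mu < 0$ means the triple strictly wins and is uniquely minimizing; and $\mu = 0$ yields a tie with exactly those two competitors as the only minimizers up to reflection. For $V_1 \geq V_0$, $\mu_\ell(V_1) \geq 0$ and strict monotonicity of $\mu$ in $V_2$ yields $\mu(V_1, V_2) > \mu_\ell(V_1) \geq 0$ for every $V_2 > V_1$, so the double interval is always the unique minimizer and no tie function is needed.

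For the regularity and asymptotic behavior of $\lambda$: strict monotonicity follows from two-variable strict monotonicity of $\mu$, since $V_1 < V_1'$ gives $\mu(V_1', \lambda(V_1)) > \mu(V_1, \lambda(V_1)) = 0$ and hence $\lambda(V_1') > \lambda(V_1)$. The $C^1$ property comes from the implicit function theorem applied to $\mu(V_1, \lambda(V_1)) = 0$ with $\partial_{V_2}\mu < 0$, using $C^1$-smoothness of $\mu$ (which should follow from differentiating the equilibrium perimeter formulas). For the blowup $\lambda(V_1) \to \infty$ as $V_1 \to V_0^-$: if $\lambda(V_1)$ stayed bounded by some finite $L$, continuity of $\mu$ would give $\mu(V_0, L) = 0$, contradicting $\mu(V_0, L) > \mu_\ell(V_0) \geq 0$. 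For the positive limit as $V_1 \to 0^+$: $\lambda$ is strictly increasing so $\lambda_0 = \lim_{V_1 \to 0^+} \lambda(V_1)$ exists and is $\geq 0$; if $\lambda_0$ were $0$ then for small $V_1$ the pair $(V_1, \lambda(V_1))$ would lie in the small-volume regime where $\mu > 0$ (again by near-Euclidean behavior), contradicting $\mu(V_1, \lambda(V_1)) = 0$. The main obstacle I anticipate is the sign input $\mu > 0$ for small arguments and for $V_2$ just above $V_1$, which I expect to extract from the equilibrium analysis carried out in the preceding lemmas of \cite{db}; everything else is a clean application of monotonicity, the intermediate value theorem, and the implicit function theorem.
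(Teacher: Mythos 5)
Your proposal is correct and follows essentially the same route as the paper: define $V_0=\sup\{V_1:\mu_\ell(V_1)<0\}$, use strict monotonicity of $\mu$ in $V_2$ together with $\mu(V_1,V_1)>0$ and the intermediate value theorem to define $\lambda$, read off the minimization statements from the sign of $\mu$ and the classification of competitors in \cite{db}, and get the blowup and regularity from continuity, monotonicity, and the implicit function theorem (your contradiction at $(V_0,L)$ is a mildly streamlined version of the paper's). The one caveat is your parenthetical justification that $\mu>0$ for $V_2$ just above $V_1$ because of ``near-Euclidean'' small-volume behavior: that heuristic does not apply when $V_1$ is large, and the correct input is exactly \cite[Prop.~4.10]{db} ($\mu(V_1,V_1)>0$ for all $V_1$), which is what the paper cites; similarly, the boundary fact $\mu_\ell(V_0)\geq 0$ is cleanest to prove via continuity of $\mu$ itself rather than an asserted continuity of $\mu_\ell$.
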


\noindent See Figure \ref{fig:tiefunc}.

\begin{proof}
We modify the proof of \cite[Thm. 4.15]{db}.
The idea is that $\mu_\ell$ (Def. \ref{def:muell}) should be negative in the region where $\lambda$ is defined.
With this in mind, let the blowup time $V_0$ be
\begin{equation}
\label{eq:constructV0}
V_0 = \sup \{V_1: \mu_\ell(V_1) < 0 \}, \quad 0 \leq V_0 \leq \infty,
\end{equation}
where this quantity is zero if no $V_1$ satisfies the condition.

For $V_1 <V_0$, we now construct the tie function $\lambda(V_1)$.
Because $\mu_\ell$ is nondecreasing, $\mu_\ell(V_1)<0$.
Since $\mu(V_1,V_1)>0$ \cite[Prop. 4.10]{db} and $\mu$ is strictly decreasing in $V_2$
\cite[Lemma 4.9]{db}, there is a unique $\lambda(V_1) > V_1$ such that $\mu(V_1,\lambda(V_1))=0$.
Then $\mu(V_1,V_2)<0$ for $V_2>\lambda(V_1)$ and $\mu(V_1,V_2)>0$ for $V_1 \leq V_2 < \lambda(V_1)$.
Because the double interval and the triple interval are the only possible perimeter minimizers \cite[Prop. 4.6]{db},
we have proved the first three items in the case that $V_1<V_0$.

Consider now the case $V_1\geq V_0$. We must show that $\mu(V_1,V_2)>0$ for all $V_2 \geq V_1$.
If $V_0=\infty$, this is trivial.
If $V_0=0$, then $\mu_\ell(V_1)\geq 0$ for all $V_1>0$.
Because $\mu$ is strictly decreasing in $V_2$,
$\mu(V_1,V_2) > 0$ for all $V_1 \leq V_2$, as desired.
Now suppose that $0<V_0<\infty$.
We claim that $\mu_\ell(V_0) \geq 0$.
If $\mu_\ell(V_0)<0$, then for some $V_2>V_0$, $\mu(V_0,V_2)<0$.
By continuity of $\mu$, for some $V_0<V_1<V_2$, $\mu(V_1,V_2)<0$, and so $\mu_\ell(V_1)<0$, contradiction.
Thus the claim holds. Becaue $\mu_\ell$ is nondecreasing, $\mu_\ell(V_1) \geq 0$ for all $V_1 \geq V_0$.
Then because $\mu$ is strictly decreasing in $V_2$,
$\mu(V_1,V_2) > 0$ for all $V_2 \geq V_1 \geq V_0$.

It remains to prove the fourth item. Using exactly the same arguments as in the proof of \cite[Thm. 4.15]{db}, one can show everything except the statement that $\lambda(V_1) \to \infty$ as $V_1\to V_0$.
We now show this last statement. Notice that we can assume $0<V_0<\infty$.
Suppose to the contrary that $\lambda$ increases to a finite limit $L$ as $V_1 \to V_0$.
Then $\mu(V_1,V_2)<0$ for all $V_1<V_0$ and $V_2>L$. By continuity, $\mu(V_0,V_2) \leq 0$ for all $V_2>L$,
and so because $\mu$ is strictly decreasing in $V_2$, $\mu(V_0,V_2)<0$ for all $V_2>L$.
Again by continuity, $\mu(V_1,V_2)<0$ for some $V_1>V_0$ and $V_2>L$,
implying that $\mu_\ell(V_1)<0$, contradiction.
Therefore, the proposition is proved.
\end{proof}

A question remains: what values can the blowup time $V_0$ take?
Theorem 4.15 of Bongiovanni et al. shows that if $(\log f)'$ is unbounded, then $V_0=\infty$,
and their Example 4.13 gives a density with $V_0=0$.
In the next section, we will devise a general procedure for determining the value of $V_0$
and finally present a density for which $0<V_0<\infty$.

\section{Computing Blowup Time}
\label{sec:compute}

We now seek to compute the blowup time $V_0$ in Proposition \ref{prop:existblowup}.
We must first define some quantities.
As in Bongiovanni et al. \cite[Lemma 4.2]{db}, define the volume coordinate by
$$V = \int_0^x f,$$
where $x$ is the positional coordinate.
In particular, $f$ is a strictly log-convex density if and only if $f$ is strictly \emph{convex} in volume coordinate.

\begin{definition}
\label{def:LM}
On $\R$ with a symmetric, strictly log-convex, $C^1$ density $f$, define
\begin{align*}
L& =\lim_{x\to\infty} (\log f)'(x) = \lim_{V \to \infty} f'(V), \\
M& =\lim_{V \to \infty} f(2V)-2f(V),
\end{align*}
where $x$ is the positional coordinate and $V$ the volume coordinate.
\end{definition}

Notice that $L$ exists because $f'$ is strictly increasing and $M$ exists because, by taking derivatives, $f(2V)-2f(V)$ is strictly increasing when $V>0$.
Observe that $0<L\leq \infty$ and $-\infty < M \leq \infty$.

From now on, we work exclusively with volume coordinates.
The following lemma shows a relationship between $L$ and $M$.

\begin{lemma}
\label{lem:LM}
$L=\infty$ implies $M=\infty$.
\end{lemma}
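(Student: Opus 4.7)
The plan is to convert $f(2V) - 2f(V)$ into an integral involving $f'$ via the fundamental theorem of calculus, and then bound that integral below by a quantity of order $f'(V)$, which blows up when $L = \infty$.

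First, writing $f(V) = f(0) + \int_0^V f'(t)\,dt$, one obtains
$$f(2V) - 2f(V) + f(0) = \int_V^{2V} f'(t)\,dt - \int_0^V f'(t)\,dt = \int_0^V \bigl[f'(V+s) - f'(s)\bigr]\,ds,$$
after substituting $t = V+s$ in the first integral. Since $f$ is convex in volume coordinate, $f'$ is nondecreasing, so the integrand is pointwise nonnegative.

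Next, for $V \geq 1$, I would restrict the integral to the subinterval $s \in [0,1]$, on which $f'(V+s) \geq f'(V)$ and $f'(s) \leq f'(1)$, to obtain
$$f(2V) - 2f(V) + f(0) \geq \int_0^1 \bigl[f'(V+s) - f'(s)\bigr]\,ds \geq f'(V) - f'(1).$$
Letting $V \to \infty$ and invoking $L = \infty$ then forces $f(2V) - 2f(V) \to \infty$, i.e., $M = \infty$.

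The main obstacle is that the bare midpoint-convexity bound applied to the triple $0, V, 2V$ only yields $f(2V) - 2f(V) \geq -f(0)$, which is far too weak to say anything about $M$. The trick is to exploit $L = \infty$ by isolating a short window near the origin in the integral representation, where the difference $f'(V+s) - f'(s)$ is pinned below by $f'(V) - f'(1)$ and hence blows up; the rest of the integral, which is nonnegative, is simply discarded.
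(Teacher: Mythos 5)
Your proof is correct, but it follows a genuinely different route from the paper's. The paper defines $g(V)=f(2V)-2f(V)$, observes the identity $\frac{f(2V)}{2V}-\frac{f(V)}{V}=\frac{g(V)}{2V}$, and telescopes it over the dyadic points $V=2^k$ to get $\frac{f(2^n)}{2^n}-f(1)\leq\paren{1-2^{-n}}M$; since $L=\infty$ forces $f(V)/V\to\infty$, the left side diverges and $M=\infty$. You instead use the fundamental theorem of calculus to write $f(2V)-2f(V)+f(0)=\int_0^V\bracket{f'(V+s)-f'(s)}\,ds$ and discard all but the window $s\in[0,1]$, where monotonicity of $f'$ pins the integrand below by $f'(V)-f'(1)$. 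Both arguments are sound and elementary (your use of the FTC in the volume coordinate is legitimate, since $df/dV=(\log f)'(x)$ is continuous for a $C^1$ density, and the paper itself treats $f'(V)$ as a well-defined strictly increasing function). Your version has the small advantage of producing an explicit pointwise bound $f(2V)-2f(V)\geq f'(V)-f'(1)-f(0)$ for $V\geq 1$, hence the quantitative conclusion $M\geq L-f'(1)-f(0)$ even when $L<\infty$; the paper's version stays entirely within the discrete quantities it already needs (the monotonicity of $g$ used to define $M$) and never invokes an integral representation. Either proof would serve.
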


\begin{proof}
Let $g(V)=f(2V)-2f(V)$ for $V >0$. Because $g$ is strictly increasing, $g \leq M$.
Notice that
$$\frac{f(2V)}{2V}-\frac{f(V)}{V} = \frac{g(V)}{2V},$$
so by telescoping,
$$\frac{f(2^n)}{2^n}-f(1) = \sum_{k=0}^{n-1}\frac{g(2^k)}{2^{k+1}}
\leq \paren{1-\frac{1}{2^n}}M.$$
Since $L=\infty$, $f(V)/V \to \infty$ as $V \to \infty$.
So as $n\to\infty$, the left-hand side diverges to infinity, implying that $M=\infty$.
\end{proof}

The following example shows that the converse of Lemma \ref{lem:LM} is not true.

\begin{example}
Consider $f(V)=V\tan^{-1} V - \log(V^2+1)/2+1$.
We can easily check that $f$ is symmetric, $C^1$ with $f'(V)=\tan^{-1} V$, and strictly convex.
Therefore $L=\pi/2$, and we can compute that $M=\infty$.
\end{example}

We now define another quantity, which will later be shown to be related to the left endpoint of the double interval.

\begin{definition}
\label{def:Vstar}
Let $f$ be a symmetric, strictly log-convex, $C^1$ density on $\R$ with $f'(V)$ bounded.
For each $V_1$, define $V^*$ to be the unique solution to
\begin{equation}
\label{eq:Vstar}
f'(V^*)+f'(V^*+V_1)=-L,
\end{equation}
where $L$ is as in Definition \ref{def:LM}.
\end{definition}

Notice that the left-hand side of \eqref{eq:Vstar} is strictly increasing in $V^*$, tends to $-2L$ as $V^* \to -\infty$,
and tends to $2L$ as $V^* \to \infty$. So \eqref{eq:Vstar} has a unique solution, and $V^*$ is well defined.

Let $\Vtil$ be the leftmost endpoint of the double interval. By the equilibrium condition \cite[Cor. 3.3]{db}, $\Vtil$ is the unique solution to
\begin{equation}
\label{eq:equilibrium}
f'(\Vtil)+f'(\Vtil+V_1)+f'(\Vtil+V_1+V_2)=0.
\end{equation}
Because $f'$ is strictly increasing, we can see that $\Vtil$ is strictly decreasing in both $V_1$ and $V_2$.
We now characterize $V^*$ as the limit of $\Vtil$ as $V_2 \to \infty$.

\begin{lemma}
\label{lem:limitVstar}
Suppose that $L<\infty$.
For a fixed $V_1$, $\lim_{V_2 \to \infty} \Vtil = V^*$.
\end{lemma}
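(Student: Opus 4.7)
The plan is to show that $\widetilde{V}$, viewed as a function of $V_2$, converges as $V_2 \to \infty$ to a finite limit $W$, and then pass to the limit in the equilibrium equation \eqref{eq:equilibrium} to identify $W$ with $V^*$. Throughout I use that $f'$ is strictly increasing with $\lim_{V\to\infty} f'(V) = L$ and, by the symmetry of $f$ (equivalently, the oddness of $f'$), $\lim_{V\to-\infty} f'(V) = -L$; in particular $|f'| < L$ everywhere since $L < \infty$.

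\textbf{Step 1: monotone convergence.} As already noted, $\widetilde{V}$ is strictly decreasing in $V_2$, so it converges (as $V_2 \to \infty$) to some limit $W \in [-\infty, \widetilde{V}(V_1,V_1))$. The first task is to rule out $W = -\infty$. Suppose for contradiction that $\widetilde{V} \to -\infty$. Then $\widetilde{V} + V_1 \to -\infty$ as well, so
$$f'(\widetilde{V}) + f'(\widetilde{V}+V_1) \to -2L.$$
The equilibrium equation \eqref{eq:equilibrium} forces
$$f'(\widetilde{V}+V_1+V_2) = -f'(\widetilde{V}) - f'(\widetilde{V}+V_1) \to 2L,$$
but $f' < L$ pointwise, so the left-hand side stays strictly below $L$. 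This contradicts $2L > L$ (valid because $L > 0$), so $W$ is finite.

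\textbf{Step 2: passing to the limit.} Since $\widetilde{V} \to W$ is finite and $V_2 \to \infty$, we have $\widetilde{V}+V_1+V_2 \to \infty$, so by definition of $L$,
$$f'(\widetilde{V}+V_1+V_2) \to L.$$
Continuity of $f'$ then lets us take the limit of \eqref{eq:equilibrium} term by term to obtain
$$f'(W) + f'(W+V_1) + L = 0,$$
i.e.\ $f'(W) + f'(W+V_1) = -L$. By the uniqueness of the solution to \eqref{eq:Vstar} (noted right after Definition \ref{def:Vstar}), $W = V^*$.

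The main obstacle is Step 1, since monotonicity of $\widetilde{V}$ alone does not rule out a limit of $-\infty$; the boundedness of $f'$ (which requires $L < \infty$, exactly the hypothesis of the lemma) is what breaks the tie. Once divergence is excluded, Step 2 is a clean limit argument using continuity and the uniqueness built into Definition \ref{def:Vstar}.
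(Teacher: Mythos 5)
Your proof is correct and follows essentially the same strategy as the paper: use monotonicity of $\Vtil$ in $V_2$ to obtain a limit $W \in [-\infty,\infty)$, pass to the limit in the equilibrium equation \eqref{eq:equilibrium}, and identify $W$ with $V^*$ by the uniqueness of the solution to \eqref{eq:Vstar}. The only real difference is how $W=-\infty$ is excluded: the paper first invokes \cite[Lemma 4.8]{db} (the bound $\Vtil > -(V_1+V_2)/2$) to guarantee $\Vtil+V_1+V_2\to\infty$ even before knowing $W$ is finite, and then reads the contradiction $-L-L+L=0$ off the limiting equation, whereas you rule out $W=-\infty$ first via the pointwise bound $f'<L$, which would force $f'(\Vtil+V_1+V_2)\to 2L>L$. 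Your variant is marginally more self-contained, since it does not need the lower bound on $\Vtil$ from \cite[Lemma 4.8]{db}; both hinge on the same fact, $L>0$.
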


\begin{proof}
Since $\Vtil$ is strictly decreasing in $V_2$, the limit $V_\ell=\lim_{V_2 \to \infty} \Vtil$ exists (it may be $-\infty$).
It remains to show that $V_\ell=V^*$.
By \cite[Lemma 4.8]{db}, $\Vtil > -(V_1+V_2)/2$, so $\Vtil+V_1+V_2 > (V_1+V_2)/2 \to \infty$
as $V_2 \to \infty$.
Hence by taking $V_2 \to \infty$ in \eqref{eq:equilibrium},
$$f'(V_\ell)+f'(V_\ell+V_1) +L = 0,$$
where we interpret $f'(-\infty) = -L$ in the case that $V_\ell=-\infty$.
Finally, observe that $V_\ell=-\infty$ is not possible due to $L>0$, so $V_\ell$ is finite and equals $V^*$.
\end{proof}

The next lemma collects some properties of $V^*$ as $V_1 \to \infty$.

\begin{lemma}
\label{lem:VstarV1big}
Suppose that $L<\infty$ and let $V^*$ be as in Definition \ref{def:Vstar}. Then $V^*$ is strictly decreasing in $V_1$, $\lim_{V_1 \to \infty} V^* = -\infty$, and $\lim_{V_1 \to \infty} (V^*+V_1) = 0$.
\end{lemma}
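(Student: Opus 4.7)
The plan is to derive everything directly from the defining equation
\[
f'(V^*) + f'(V^* + V_1) = -L,
\]
without differentiating $V^*$, since $f$ is only assumed $C^1$.

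First, for monotonicity of $V^*$ in $V_1$: the left-hand side of \eqref{eq:Vstar} is strictly increasing in $V^*$ (both summands) and in $V_1$ (through the second summand). So when $V_1$ increases, $V^*$ must strictly decrease for equality to be maintained. By the same kind of manipulation I would also establish that $V^* + V_1$ is strictly increasing in $V_1$: given $V_1 < V_1'$ with corresponding solutions $V^* > V^{*\prime}$, strict monotonicity of $f'$ gives $f'(V^*) > f'(V^{*\prime})$, so the equation forces $f'(V^* + V_1) < f'(V^{*\prime} + V_1')$, and hence $V^* + V_1 < V^{*\prime} + V_1'$.

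With these monotonicities in hand, both $V^*_\infty := \lim_{V_1 \to \infty} V^* \in [-\infty, \infty)$ and $c := \lim_{V_1 \to \infty}(V^* + V_1) \in (-\infty, \infty]$ exist, and it remains to eliminate the degenerate cases. If $V^*_\infty$ were finite, then $V^* + V_1 \to \infty$, so $f'(V^* + V_1) \to L$ and hence $f'(V^*_\infty) = -2L$; but $f$ is symmetric in the volume coordinate (since $f$ is symmetric in $x$ and $V = \int_0^x f$ is odd), so $f'$ is odd and takes values only in $(-L, L)$, a contradiction. Thus $V^*_\infty = -\infty$ and $f'(V^*) \to -L$. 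Next, if $c = \infty$, then $f'(V^* + V_1) \to L$, so the left-hand side of the defining equation tends to $0 \neq -L$, again a contradiction. Therefore $c$ is finite, and passing to the limit in the defining equation yields $f'(c) = 0$; since $f$ is symmetric and strictly convex, $f'(0) = 0$ is the unique zero, forcing $c = 0$.

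The main technical point, though not really an obstacle, is the observation that $f'$ takes values in the open interval $(-L, L)$, which is what rules out both degenerate limit cases. This uses $L < \infty$ together with the oddness of $f'$ coming from symmetry of $f$. Given this, the argument reduces to the monotonicity setup and a short case analysis.
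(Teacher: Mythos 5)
Your proof is correct and follows the same basic route as the paper: get strict monotonicity of $V^*$ from strict monotonicity of $f'$, then pass to the limit in \eqref{eq:Vstar}. The one genuine difference is how $\lim_{V_1\to\infty}V^*=-\infty$ is obtained. The paper simply quotes the bound $V^*\le -V_1$ from \cite[Lemma 4.8]{db}, which gives the divergence immediately, whereas you derive it internally from the observation that $f'$ is odd in the volume coordinate (so its range is contained in $(-L,L)$), which rules out a finite limit $V^*_\infty$ since it would force $f'(V^*_\infty)=-2L$. Your version is self-contained and makes explicit two points the paper leaves implicit --- that the limits exist before being identified (via your monotonicity of $V^*+V_1$ in $V_1$, which is correctly argued) and that $L>0$ is what excludes $V^*+V_1\to\infty$ --- at the cost of a longer case analysis; both arguments are valid.
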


\begin{proof}
The fact that $f'$ is strictly increasing implies that $V^*$ is strictly decreasing in $V_1$.
This and the fact that $V^* \leq -V_1$ \cite[Lemma 4.8]{db} imply that $\lim_{V_1 \to \infty} V^* = -\infty$.
Now take $V_1 \to\infty$ in \eqref{eq:Vstar} to obtain $\lim_{V_1 \to \infty} (V^*+V_1) = 0$.
\end{proof}

The following proposition gives a way to compute $\mu_\ell$ (Def. \ref{def:muell}) based on Lemma \ref{lem:limitVstar}.

\begin{prop}
\label{prop:computemuell}
Let $\mu_\ell$ be as in Definition \ref{def:muell}, $L$ and $M$ be as in Definition \ref{def:LM}, and $V^*$ be as in Definition \ref{def:Vstar}.
Suppose that $L<\infty$. Then
\begin{equation}
\label{eq:muell}
\mu_\ell(V_1) = 2f\paren{\frac{V_1}{2}}-f(V^*)-f(V^*+V_1)-V^*L-M,
\end{equation}
where this quantity is finite if $M<\infty$ and equals $-\infty$ if $M=\infty$.
\end{prop}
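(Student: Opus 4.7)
The plan is to write out $\mu(V_1,V_2)$ explicitly from the two configurations' perimeters and then pass to the limit $V_2 \to \infty$, using Lemma \ref{lem:limitVstar} to control $\Vtil$ and the asymptotics of $f$ at infinity (encoded by $L$ and $M$) to handle the terms that individually diverge. In volume coordinates, the double interval has boundary at volumes $\Vtil$, $\Vtil+V_1$, $\Vtil+V_1+V_2$, giving $P_2 = f(\Vtil)+f(\Vtil+V_1)+f(\Vtil+V_1+V_2)$, while the triple interval, being symmetric about $0$, has $P_3 = 2f(V_1/2)+2f((V_1+V_2)/2)$. Setting $U=(V_1+V_2)/2$,
$$\mu(V_1,V_2) = 2f(V_1/2) - f(\Vtil) - f(\Vtil+V_1) + \bracket{2f(U) - f(\Vtil+2U)}.$$

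By Lemma \ref{lem:limitVstar}, $\Vtil \to V^*$ as $V_2 \to \infty$, so continuity of $f$ gives $f(\Vtil) \to f(V^*)$ and $f(\Vtil+V_1) \to f(V^*+V_1)$. The substantive work lies in the bracketed term, which I would split as
$$2f(U) - f(\Vtil+2U) = -\bracket{f(2U)-2f(U)} - \bracket{f(\Vtil+2U) - f(2U)}.$$
The first bracket tends to $M$ by Definition \ref{def:LM}. For the second, write it as $\int_{2U}^{\Vtil+2U} f'(t)\,dt$ and exploit $f'(t) \to L$: since $\Vtil \to V^*$ is bounded and $2U \to \infty$, a routine $\epsilon$-argument gives the limit $V^*L$. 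Combining yields \eqref{eq:muell} when $M<\infty$.

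The main obstacle is this integral estimate: the length of $[2U,\Vtil+2U]$ tends to $V^*$ while its location escapes to infinity, so one must show the two effects combine cleanly to give $V^*L$. The hypothesis $L<\infty$ means $f'$ is uniformly close to $L$ on tails, and boundedness of $\Vtil$ near $V^*$ then makes the bound routine. For the case $M=\infty$, the same decomposition shows that $-\bracket{f(2U)-2f(U)} \to -\infty$ while $-\bracket{f(\Vtil+2U)-f(2U)}$ still tends to the finite value $-V^*L$, so $\mu_\ell = -\infty$ as claimed.
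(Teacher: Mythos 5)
Your proof is correct and takes essentially the same approach as the paper: the same decomposition of the divergent term into $-\bracket{f(2U)-2f(U)} \to -M$ plus the increment of $f$ over an interval of signed length $\Vtil \to V^*$ sitting far to the right, which tends to $V^*L$; the paper controls that increment with the Mean Value Theorem (writing it as $-\Vtil f'(V)$ for some intermediate $V \to \infty$) where you use $\int_{2U}^{\Vtil+2U} f'$, an immaterial difference. Your treatment of the $M=\infty$ case likewise matches the paper's.
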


\begin{proof}
By Lemma \ref{lem:limitVstar},
\begin{align*}
\mu_\ell(V_1)&=\lim_{V_2 \to\infty} (P_3-P_2) \\
&=\lim_{V_2 \to\infty} \bracket{2f\paren{\frac{V_1}{2}} + 2f\paren{\frac{V_1+V_2}{2}} - f(\Vtil) - f(\Vtil+V_1) - f(\Vtil+V_1+V_2)} \\
&= 2f\paren{\frac{V_1}{2}}-f(V^*)-f(V^*+V_1)+
\lim_{V_2\to\infty}\bracket{2f\paren{\frac{V_1+V_2}{2}} - f(\Vtil+V_1+V_2)}.
\end{align*}
Rewrite the quantity in the last limit as
$$\bracket{2f\paren{\frac{V_1+V_2}{2}} - f(V_1+V_2)} +\bracket{ f(V_1+V_2) - f(\Vtil+V_1+V_2)}.$$
The first bracket tends to $-M$ as $V_2 \to \infty$.
By the Mean Value Theorem, the second bracket equals
$-\Vtil f'(V)$
for some $V>\Vtil+V_1+V_2>(V_1+V_2)/2 \to \infty$ as $V_2 \to\infty$ \cite[Lemma 4.8]{db}, so it tends to $-V^*L$ as $V_2 \to \infty$.
Therefore, $\mu_\ell$ has the desired formula.
\end{proof}

We are now ready to state the proposition computing the blowup time $V_0$.

\begin{prop}
\label{prop:computeblowup}
Consider $\R$ with a symmetric, strictly log-convex, $C^1$ density $f$.
Let $L$ and $M$ be as in Definition \ref{def:LM} and $V^*$ be as in Definition \ref{def:Vstar}.
Then the blowup time $V_0$ of Proposition \ref{prop:existblowup} can be computed as follows.

\begin{itemize}
\item If $L=\infty$ or $M=\infty$, then $V_0=\infty$.
\item If $L<\infty$ and $M<\infty$, then $V_0<\infty$ and
\begin{equation}
\label{eq:condV0}
V_0=\sup\set{V_1: \mu_\ell(V_1) <0} = \inf\set{V_1: \mu_\ell(V_1) \geq 0},
\end{equation}
where $\mu_\ell$ has the formula \eqref{eq:muell}
and the $\sup$ is $0$ if there is no $V_1$ satisfying its condition.
\end{itemize}
\end{prop}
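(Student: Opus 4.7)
The plan is to handle the two bullets of the statement separately, using Proposition \ref{prop:computemuell} wherever $L<\infty$ and falling back on \cite[Thm.~4.15]{db} when $L=\infty$.

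For the first bullet, if $L=\infty$ then $(\log f)'$ is unbounded and \cite[Thm.~4.15]{db} gives $V_0=\infty$ directly. If instead $L<\infty$ and $M=\infty$, then Proposition \ref{prop:computemuell} says the formula \eqref{eq:muell} evaluates to $-\infty$, so $\mu_\ell(V_1)<0$ for every $V_1>0$, and the construction \eqref{eq:constructV0} immediately yields $V_0=\infty$.

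For the second bullet, assume $L<\infty$ and $M<\infty$. I will show that $\mu_\ell(V_1)\to+\infty$ as $V_1\to\infty$, which forces $V_0<\infty$. By Lemma \ref{lem:VstarV1big}, $V^*\to-\infty$ and $\eps:=V^*+V_1\to 0$. Since $f$ is symmetric, $f(V^*)=f(V_1-\eps)$; and since $f'$ is nondecreasing with limit $L$ (hence $|f'|\le L$ by symmetry), the Mean Value Theorem gives $f(V^*)=f(V_1)+O(\eps)=f(V_1)+o(1)$. Substituting into \eqref{eq:muell} yields
\begin{equation*}
\mu_\ell(V_1)=-\bigl[f(V_1)-2f(V_1/2)\bigr]-f(\eps)+V_1 L-\eps L-M+o(1).
\end{equation*}
The bracket tends to $M$ (this is precisely the definition of $M$ applied at $V=V_1/2$), $f(\eps)\to f(0)$, $\eps L\to 0$, and the linear term $V_1 L$ drives the whole expression to $+\infty$.

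Finally, the equality $\sup=\inf$ in \eqref{eq:condV0} is automatic: because $\mu_\ell$ is nondecreasing (noted after Definition \ref{def:muell}), $\{V_1:\mu_\ell(V_1)<0\}$ and $\{V_1:\mu_\ell(V_1)\ge 0\}$ form an initial and a final segment partitioning $[0,\infty)$, and for any such partition the supremum of the former equals the infimum of the latter. The main obstacle is the asymptotic step above, where one must verify cleanly that the correction $f(V_1-\eps)-f(V_1)$ is $o(1)$ using only $\eps\to 0$ and $|f'|\le L$; once that is in hand, the remaining bookkeeping is routine.
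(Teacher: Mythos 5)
Your proposal is correct and follows essentially the same route as the paper: cite the unbounded case away, use Proposition \ref{prop:computemuell} to get $\mu_\ell=-\infty$ when $M=\infty$, and when $M<\infty$ show $\mu_\ell(V_1)\to+\infty$ as $V_1\to\infty$ via Lemma \ref{lem:VstarV1big}, with the same three estimates (the bracket tending to $-M$ by the definition of $M$, the correction $f(V_1)-f(V^*)=o(1)$ by the Mean Value Theorem and $|f'|\le L$, and the term $-V^*L\to+\infty$ dominating). The step you flag as the main obstacle is handled in the paper exactly as you sketch it, so no gap remains.
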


\begin{proof}
Bongiovanni et al. \cite[Prop. 4.11]{db} show that if $L=\infty$, then $V_0=\infty$.
So suppose that $L<\infty$. By Proposition \ref{prop:computemuell}, $\mu_\ell$ is given by \eqref{eq:muell}.

The characterization of $V_0$ in \eqref{eq:constructV0} shows the first half of \eqref{eq:condV0}.
Then fact that $\mu_\ell$ is nondecreasing implies the second half of \eqref{eq:condV0}.

It remains to show that, still assuming $L<\infty$, $V_0=\infty$ if and only if $M=\infty$.
If $M=\infty$, then $\mu_\ell(V_1)=-\infty$ by \eqref{eq:muell}, and so $V_0=\infty$.
Now suppose that $M<\infty$.
By \eqref{eq:muell}, we can write
$$\mu_\ell(V_1) = \bracket{2f\paren{\frac{V_1}{2}}-f(V_1)}+\bracket{f(-V_1)-f(V^*)}-f(V^*+V_1)-V^*L-M.$$
Take $V_1 \to \infty$ and apply Lemma \ref{lem:VstarV1big}.
The first bracket tends to $-M$. The second bracket is $(-V_1-V^*)f'(V)$ for some $V$, which tends to 0 because $V^*+V_1\to 0$ as $V_1 \to \infty$ and $f'$ is bounded. Finally, $f(V^*+V_1) \to f(0)$ and $V^*L \to -\infty$.
Hence $\mu_\ell \to \infty$ as $V_1 \to \infty$, showing that $V_0<\infty$.
\end{proof}

From Proposition \ref{prop:computeblowup}, we obtain two corollaries stating conditions for when $V_0=0$ and $V_0=\infty$.

\begin{cor}
\label{cor:criteriainf}
Consider $\R$ with a symmetric, strictly log-convex, $C^1$ density. Let $L$ and $M$ be as in
Definition \ref{def:LM}. Then $V_0=\infty$ if and only if $M=\infty$. In particular, $L=\infty$ implies $V_0=\infty$.
\end{cor}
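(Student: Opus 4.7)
The plan is to observe that this corollary is essentially a repackaging of Proposition \ref{prop:computeblowup} together with Lemma \ref{lem:LM}, so almost no new work is required. Since the corollary is a biconditional plus an ``in particular'' clause, I would organize the proof into three short implications.

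First, for the implication $M=\infty \Rightarrow V_0=\infty$: this is immediate from the first bullet of Proposition \ref{prop:computeblowup}, which explicitly asserts $V_0=\infty$ whenever $M=\infty$.

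Next, for the converse $V_0=\infty \Rightarrow M=\infty$, I would argue by contrapositive: assume $M<\infty$ and show $V_0<\infty$. By Lemma \ref{lem:LM}, $L=\infty$ would force $M=\infty$, so under the assumption $M<\infty$ we must have $L<\infty$. Then both hypotheses of the second bullet of Proposition \ref{prop:computeblowup} hold, and that bullet gives $V_0<\infty$ directly.

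Finally, for the ``in particular'' clause: if $L=\infty$, Lemma \ref{lem:LM} yields $M=\infty$, and by the first implication $V_0=\infty$. The only place any thought is needed is in noticing that Lemma \ref{lem:LM} is what lets us cleanly reduce the hypothesis ``$L=\infty$ or $M=\infty$'' in Proposition \ref{prop:computeblowup} to the single condition $M=\infty$; there is no genuine obstacle beyond this bookkeeping step.
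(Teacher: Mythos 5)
Your proposal is correct and follows essentially the same route as the paper: both deduce from Proposition \ref{prop:computeblowup} that $V_0=\infty$ exactly when $L=\infty$ or $M=\infty$, and then use Lemma \ref{lem:LM} to collapse that disjunction to the single condition $M=\infty$. Your version merely unpacks the biconditional into its two implications more explicitly.
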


\begin{proof}
By Proposition \ref{prop:computeblowup}, $V_0=\infty$ if and only if $L=\infty$ or $M=\infty$.
This is equivalent to $M=\infty$ by Lemma \ref{lem:LM}.
\end{proof}

\begin{cor}
\label{cor:criteria0}
Consider $\R$ with a symmetric, strictly log-convex, $C^1$ density $f$. Let $L$ and $M$ be as in
Definition \ref{def:LM}. Then $V_0=0$, that is, the double interval is uniquely perimeter minimizing for all prescribed volumes if and only if $L<\infty$ and
\begin{equation}
\label{eq:V00}
2f(0)-2f(V)+VL -M\geq 0, \quad \text{where } V = (f')^{-1}(L/2).
\end{equation}
\end{cor}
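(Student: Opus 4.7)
The plan is to deduce the corollary from Proposition \ref{prop:computeblowup} by evaluating $\mu_\ell$ in the limit $V_1 \to 0^+$. First, since $L=\infty$ forces $V_0=\infty$ by Corollary \ref{cor:criteriainf}, any density with $V_0=0$ must satisfy $L<\infty$; so from now on I assume $L<\infty$, which places us in the second bullet of Proposition \ref{prop:computeblowup}. In that setting, $V_0=\sup\set{V_1: \mu_\ell(V_1)<0}$, interpreted as $0$ if the set is empty. Therefore $V_0=0$ is equivalent to $\mu_\ell(V_1)\geq 0$ for every $V_1>0$, and since $\mu_\ell$ is nondecreasing (as noted after Definition \ref{def:muell}), this is in turn equivalent to $\lim_{V_1\to 0^+}\mu_\ell(V_1)\geq 0$.

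Next I compute this limit using the formula \eqref{eq:muell}. The defining equation \eqref{eq:Vstar} for $V^*=V^*(V_1)$ has a strictly increasing left-hand side in $V^*$ and continuous dependence on $V_1$, so $V^*(V_1)$ is continuous in $V_1$. As $V_1\to 0^+$, \eqref{eq:Vstar} degenerates to $2f'(V^*(0))=-L$, i.e., $f'(V^*(0))=-L/2$. Because $f$ is symmetric, $f'$ is odd in the volume coordinate, hence $V^*(0)=-(f')^{-1}(L/2)=-V$ in the notation of the corollary. Plugging this into \eqref{eq:muell} and using $f(-V)=f(V)$ yields
$$\lim_{V_1\to 0^+}\mu_\ell(V_1)=2f(0)-2f(V)+VL-M,$$
which is precisely the left-hand side of \eqref{eq:V00}. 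Combining with the reduction in the previous paragraph gives the biconditional.

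The only delicate point is keeping the case $M=\infty$ consistent: by \eqref{eq:muell} one then has $\mu_\ell\equiv-\infty$, so the limit is $-\infty$ and condition \eqref{eq:V00} reads $-\infty\geq 0$, which is false; this matches Corollary \ref{cor:criteriainf}, which gives $V_0=\infty\neq 0$ in that case. With this convention, the argument above goes through uniformly, and I expect no genuine obstacle beyond the bookkeeping needed to justify continuity of $V^*(V_1)$ at $V_1=0$ and the evaluation of the limit via \eqref{eq:muell}.
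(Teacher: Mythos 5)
Your proposal is correct and follows essentially the same route as the paper: reduce to $L<\infty$ via Corollary \ref{cor:criteriainf}, characterize $V_0=0$ as $\lim_{V_1\to 0}\mu_\ell(V_1)\geq 0$ using monotonicity of $\mu_\ell$ and Proposition \ref{prop:computeblowup}, and evaluate the limit of \eqref{eq:muell} using $V^*\to -(f')^{-1}(L/2)$. The extra details you supply (continuity of $V^*$ in $V_1$, oddness of $f'$ in the volume coordinate, and the $M=\infty$ bookkeeping) are all accurate and merely flesh out steps the paper leaves implicit.
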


\begin{proof}
By Corollary \ref{cor:criteriainf}, $L=\infty$ implies $V_0=\infty$. So suppose that $L<\infty$.
By Proposition \ref{prop:computeblowup}, $V_0=0$ if and only if $\mu_\ell(V_1) \geq 0$ for all $V_1>0$.
Since $\mu_\ell$ is nondecreasing, this is equivalent to $\lim_{V_1 \to 0} \mu_\ell(V_1) \geq 0$.
From \eqref{eq:Vstar}, $V^* \to -(f')^{-1}(L/2)$ as $V_1 \to 0$.
Hence by \eqref{eq:muell}, $\lim_{V_1 \to 0} \mu_\ell(V_1)$ equals the left-hand side of \eqref{eq:V00},
proving the corollary.
\end{proof}

Finally, the following example shows densities with the three types of blowup: $V_0=0$, $0<V_0<\infty$, and $V_0=\infty$. In particular, it shows that the case $0<V_0<\infty$ is indeed possible.

\begin{example}
\label{ex:threetypes}
All densities below are symmetric, $C^1$, and strictly convex in volume coordinate.
\begin{itemize}
\item \cite[Ex. 4.13]{db}. Consider $f(V)=\abs{V}+e^{-\abs{V}}$.
We can compute $L=1$, $M=0$, and $(f')^{-1}(L/2)=\log 2$.
By Corollary \ref{cor:criteria0}, we can check that $V_0=0$.

\item Consider $f(V)=\sqrt{V^2+1}-1/2$.
Then $L=1$, $M=1/2$, and $(f')^{-1}(L/2)=1/\sqrt{3}$.
Corollaries \ref{cor:criteriainf} and \ref{cor:criteria0} imply that $0<V_0<\infty$.

\item Consider the Borell density $f(x)=e^{x^2}$. Then $(\log f)'(x) = 2x$ is unbounded, so $L=\infty$.
By Corollary \ref{cor:criteriainf}, $V_0=\infty$.
\end{itemize}
\end{example}

With this example, we have completed the proof of Theorem \ref{thm:main}.

\section*{Acknowledgements}
I would like to thank Frank Morgan for checking the details and giving suggestions that improve the exposition of this paper.

\medskip

\noindent MSC2010: 49Q10

\medskip

\noindent Key words and phrases: double bubble, density, isoperimetric

\end{document}